\documentclass[11pt,letterpaper]{amsart}
\usepackage{graphicx}
\usepackage{amsmath, amssymb, amsthm, tikz, mathrsfs,verbatim,tensor,pstricks}
\usepackage[left=1in,top=1in,right=1in]{geometry}
\usepackage{setspace}
\usepackage{enumerate}

\usepackage{amsfonts}
\usepackage{latexsym}
\usepackage{graphics}

\usepackage{bbm}

\usepackage{etoolbox}
\patchcmd{\section}{\scshape}{\bfseries}{}{}
\makeatletter
\renewcommand{\@secnumfont}{\bfseries}
\makeatother

\newtheorem{theorem}{Theorem}
\newtheorem{lemma}{Lemma}[section]
\newtheorem{corollary}[theorem]{Corollary}

\theoremstyle{definition}

\theoremstyle{plain}

\newcommand{\beqlbl}{\begin{equation}}
\newcommand{\eeqlbl}{\end{equation}}

\newcommand{\sn}{S_n}

\newcommand{\R}{\ensuremath{\mathbb{R}} }

\newcommand{\cf}{\mathbf{1}}
\renewcommand{\P}{\mathbb{P}}

\newcommand{\cU}{\mathcal{U}}
\newcommand{\cT}{\mathcal{T}}

\usepackage{hyperref}
\usepackage{tikz}
\usepackage{xypic} 
\xyoption{curve}

\usetikzlibrary{decorations.markings}
\tikzstyle{vertex}=[circle, draw, inner sep=0pt, minimum size=6pt]
\tikzstyle{Vertex}=[circle, draw, inner sep=0pt, minimum size=14pt]
\tikzstyle{Vertexc}=[circle, draw, inner sep=0pt, minimum size=14pt, fill=blue!30]
\tikzstyle{vertexc}=[circle, draw, inner sep=0pt, minimum size=6pt, fill=red!40]
\tikzstyle{vertexcg}=[circle, draw, inner sep=0pt, minimum size=6pt, fill=green!70!black]


\newcommand{\beq}{\begin{equation*}}
\newcommand{\eeq}{\end{equation*}}
\newcommand{\ba}{\begin{align*}}
\newcommand{\ea}{\end{align*}}

\newcommand{\field}[1]{\mathbb{#1}}

\newcommand{\matbegin}[1]{\left (  \begin{array} {#1} }
\newcommand{\matend}{ \end{array} \right ) } 
\newcommand{\prob}{\field{P}}

\newcommand{\bt}{\mathbf{t}}

\newcommand{\fT}{\mathfrak{T}}

\newcommand{\avn}[1]{\textbf{A\!v}_n(#1)}

\begin{document}
\title{Fixed points of 321-avoiding permutations}

\author[Christopher Hoffman]{ \ Christopher~Hoffman$^{\star\ddagger}$}
\author[Douglas~Rizzolo]{ \ Douglas~Rizzolo$^{\bullet\dagger}$}
\author[Erik~Slivken]{ \ Erik~Slivken$^{\triangle \circ}$}

\thanks{\thinspace ${\hspace{-.45ex}}^\star$
Department of Mathematics,
University of Washington, 
Seattle, WA, 98195.
\hfill \\
\thinspace ${\hspace{-.45ex}}^\bullet$
Department of Mathematical Science,
University of Delaware,
Newark, DE, 19716
\hfill \\
\thinspace ${\hspace{-.45ex}}^\triangle$
Department of Mathematics,
University of California, Davis
Davis CA, 95616.
\hfill \\
${\hspace{-.45ex}}^\dagger$ 
Supported in part by NSF grant DMS-1204840
\hfil \\
${\hspace{-.45ex}}^\ddagger$ 
Supported in part by NSF grant DMS-1308645
\hfil \\
${\hspace{-.45ex}}^\circ$ 
Supported in part by NSF RTG grant 0838212
\hfil \\
Email:
\hskip.02cm
\texttt{hoffman@math.washington.edu; drizzolo@udel.edu; erikslivken@math.ucdavis.edu}
}

\date{\today}

\vskip1.3cm

\begin{abstract}
We describe the distribution of the number and location of the fixed points of permutations that avoid the pattern $321$ via a bijection with rooted plane trees on $n+1$ vertices.  Using the local limit theorem for Galton-Watson trees, we are able to give an explicit description of the limit of this distribution.           
\end{abstract}

\maketitle

\section{Introduction}

For a permutation $\tau\in \sn$ and $\sigma\in S_k$ with $k\leq n$, $\tau$ is said to contain an instance of the pattern $\sigma$ if there exists a subset of indices $i_1<i_2<\cdots i_k$ such that $\tau_{i_s}<\tau_{i_t}$ if and only if $\sigma_s < \sigma_t$ for all $1\leq s< t\leq k.$  The class of permutations that do not contain an instance of $\sigma$ are called $\sigma$-avoiding and $\avn{\sigma}$ denotes the set of $\sigma$-avoiding permutations in $\sn$.  

In recent years much work has been done in understanding the statistics of various classes of pattern-avoiding permutations. One statistic that has received particular interest is the fixed point statistic \cite{elizalde2004bijections, hrs2, miner2014shape, robertson2002refined}.  From a combinatorial perspective, much of the work concerning statistics of pattern-avoiding permutation has been in understanding how they are affected by bijections from one pattern-avoiding class to another.  Robertson, Saracino, and Zeilberger \cite{robertson2002refined} found a bijection between $\avn{321}$ and $\avn{132}$ that preserves the fixed point statistic.  Elizalde and Pak \cite{elizalde2004bijections} give another bijection between the same classes that preserves both fixed points and the number of exceedances.  From the beginning there as been interest in analyzing the typical values of this statistic.  For example, the expected number of fixed points of a uniformly random element of $\avn{\sigma}$ is computed in \cite{elizaldethesis} for several cases when $\sigma$ has length $3$.  
    
A number of recent papers \cite{bassino2016brownian, hrs2, hrs1, janson2014patterns, madras2015structure, miner2014shape} have continued this study of pattern-avoiding permutations as random objects.  In the context of random permutations, the natural object to consider is the empirical distribution of fixed points.  In the authors' previous work in \cite{hrs2} they describe the limiting distribution of the number and location of fixed points for both $\avn{123}$ and $\avn{231}$ using the global connection with Brownian excursion they established in \cite{hrs1}.  In this paper we show that the empirical distribution of fixed points for random permutations in $\avn{321}$ is related to another classical probabilistic object, specifically a size-biased Galton-Watson tree.  

In \cite{hrs1} the authors also showed that if $\tau_n \in \avn{321}$ is uniformly random then
\[ \left( \frac{1}{\sqrt{2n}} \left|\tau_n(\lfloor nt\rfloor) - \lfloor nt\rfloor\right| \right)_{0\leq t\leq 1}\]
converges in distribution to standard Brownian excursion.  This describes the structure of the bulk of the permutation, however because of the scaling one cannot extract precise information about $\tau_n$ near $1$ and $n$.  Moreover, the only information about the fixed points we can obtain from this picture is that they must occur near the beginning or end of the permutation since Brownian excursion is strictly positive except at the end points.  To understand the behavior of a permutation $\tau_n$ near the beginning and the end, we utilize a bijection with rooted plane trees on $n+1$ vertices.  Rather than take a scaling limit, which would lead back to Brownian excursion, we take a local limit that results in a particular size-biased Galton-Watson tree.  The behavior of the permutation $\tau_n$ near $1$ and $n$ can then be described using the limiting random tree and we use this to give an explicit description of the limiting empirical measure of fixed points.  To the best of our knowledge, this is the first use of local limits of random trees in the study of pattern-avoiding permutations and gives another illustration of the value of classical probabilistic limit theorems in the study of random pattern-avoiding permutations.

The bijection we use is a reinterpretation of a bijection found in \cite{elizalde2004bijections}.  The bijection in \cite{elizalde2004bijections} is from $\avn{321}$ to the set of  Dyck paths of length $2n$ .  Under this bijection the fixed points of $\tau$ are sent to peaks of height 1.  We compose this bijection with the standard bijection from Dyck paths to rooted ordered trees the treats the Dyck path as the contour process of the tree (creating a new edge with upsteps and moving toward the root with a downstep).  Under this bijection the peaks of height 1 are sent to leaves connected to the root.  In section \ref{treebijection} we make this bijection explicit.

In Section \ref{locallimit} we describe the appropriate limiting object of a sequence of rooted plane trees.  Section \ref{fixedpoints} translates information about this limiting object to information about the number and location of fixed points in $\avn{321}.$  We can then simplify the description to obtain a clean statement that does not need the formalism of size-biased Galton-Watson trees. 

We state our main result, which gives the convergence of the joint distribution of the number and location of the fixed points at the beginning and end of a permutation chosen uniformly from $\avn{321}$.

\begin{theorem} \label{fp_distro}
Let $((X_i,Y_i))_{i\geq 1}$ be a i.i.d sequence such for each $i$, $X_i$ and $Y_i$ are independent with
\[ \P(X_i=k) = \P(Y_i=k) =  \frac{1}{k2^{2k-1}} { 2k-2 \choose k-1} .\]
Further, let $N$ and $M$ be independent Geometric$(1/2)$ random variables that are also jointly independent of the sequence $((X_i,Y_i))_{i\geq 1}$.  If $\tau_n$ is a uniformly random element of $\avn{321}$ then
\begin{multline*} \left( \sum_{i=1}^{\lfloor n/2\rfloor} \delta_i \cf_{\{\tau_n(i)=i\}} , \sum_{i=\lfloor n/2\rfloor+1}^n \delta_{n+1-i} \cf_{\{\tau_n(i)=i\}} \right)
\overset{d}{\longrightarrow} \left( \sum_{k=1}^{N} \delta_{\sum_{j=1}^k X_j} \cf_{\{X_k=1\}} ,\sum_{k=1}^{M} \delta_{\sum_{j=1}^k Y_j} \cf_{\{Y_k=1\}} \right)
,\end{multline*}
the convergence being convergence in distribution on $\mathcal{M}_F(\{1,2,3,\dots\})^2$ where $\mathcal{M}_F(\{1,2,3,\dots\})$ is the set of finite measures on $\{1,2,3,\dots\}$ equipped with the topology of weak convergence.
\end{theorem}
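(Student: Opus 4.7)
The plan is to leverage the bijection $\Phi$ from Section~\ref{treebijection} to convert the statement into a question about uniform rooted plane trees on $n+1$ vertices, and then apply the local limit theorem of Section~\ref{locallimit} at both ends of the contour process. Under $\Phi$, fixed points of $\tau\in\avn{321}$ correspond to leaves among the children of the root of $T=\Phi(\tau)$: if $|T_1|,\dots,|T_K|$ are, in left-to-right order, the sizes of the subtrees rooted at the children of the root, then the $k$-th child being a leaf produces a fixed point at position $\sum_{j=1}^k |T_j|$, and the fixed points near position $n$ are read off symmetrically from the rightmost subtrees $|T_K|,|T_{K-1}|,\dots$.

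Next I would observe that the uniform rooted plane tree on $n+1$ vertices is a critical Galton--Watson tree with $\mathrm{Geom}(1/2)$ offspring distribution conditioned on size $n+1$. The generating function identity $f(x)=1-\sqrt{1-x}$ for the size of an unconditioned such tree yields
\[ \P(|T|=k) \;=\; \frac{1}{k \cdot 2^{2k-1}}\binom{2k-2}{k-1}, \]
which is exactly the common distribution of $X_i$ and $Y_i$. The local limit theorem then gives that $T_n$ converges locally to Kesten's size-biased tree $T_\infty$: the root has a size-biased $\mathrm{Geom}(1/2)$ number of children $K$ (so $\P(K=k)=k\cdot 2^{-k-1}$), one is distinguished uniformly at random as the spine child, and the remaining $K-1$ subtrees are independent unconditioned critical Galton--Watson trees.

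Let $N$ and $M$ count the non-spine children to the left and right of the spine. Conditioning on $K$ and on the uniform spine position yields
\[ \P(N=j,\,M=m) \;=\; \frac{\P(K=j+m+1)}{j+m+1} \;=\; \frac{1}{2^{j+m+2}} \;=\; \P(N=j)\,\P(M=m), \]
so $N$ and $M$ are independent $\mathrm{Geom}(1/2)$ variables, each independent of the i.i.d.\ non-spine subtree sizes, which have the $X_i$ law by the computation above. Composed with the translation of fixed points into subtree-size sums, this identifies the distributional limit of the left- and right-end descriptions of $T_n$ with the right-hand side of the theorem.

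The final step is to pass from local convergence at the root to joint weak convergence on $\mathcal{M}_F(\{1,2,\dots\})^2$ with the cutoff at $\lfloor n/2\rfloor$. Here one uses that the spine subtree dominates: non-spine subtree sizes are tight, so their total contribution is $O_\P(1)$ and the ``spine child'' of $T_n$ carries a subtree of size $n-O_\P(1)$. Consequently, with probability tending to $1$, every left-end fixed point lies in $[1,\lfloor n/2\rfloor]$, every right-end fixed point lies in $(\lfloor n/2\rfloor,n]$, and no fixed point sits near $n/2$. Weak convergence then reduces to convergence of the mass on each finite initial segment together with tightness of the total mass, both of which follow from local convergence applied at both ends. \emph{The hard part will be arranging local convergence at the two ends of the contour process simultaneously and extracting the asymptotic independence}: both ends refer to the root of the same tree $T_n$, so this must be set up carefully in Section~\ref{locallimit}, but the size-biased structure of Kesten's tree delivers the independence of the two ends via the conditioning-on-$K$ identity above.
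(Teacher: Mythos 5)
Your proposal is correct and follows essentially the same route as the paper: the tree bijection sending fixed points to leaf children of the root at positions given by partial sums of subtree sizes, the identification of the uniform plane tree with a conditioned $\mathrm{Geom}(1/2)$ Galton--Watson tree, the local limit theorem yielding Kesten's size-biased tree, and the computation showing the left and right non-spine child counts are independent $\mathrm{Geom}(1/2)$ with i.i.d.\ Catalan-distributed subtree sizes. The only step you describe informally that the paper makes precise is the passage from local convergence to convergence of the pair of measures, which the paper handles by a Skorokhod representation so that for large $n$ the finite fringe subtrees stabilize almost surely and the spine subtree absorbs the cutoff at $\lfloor n/2\rfloor$ --- exactly the ``spine dominates'' argument you outline.
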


Recall that the topology of weak convergence on $\mathcal{M}_F(\{1,2,3,\dots\})$ is the topology generated by the set maps $\{ \mu \mapsto \smallint f d\mu \ | \ f:\{1,2,3,\dots\}\to \R \textrm{ is bounded}\}$. 

For comparison, the strongest prior result on the location of fixed points for random permutations in $\avn{321}$ is the following result from \cite{miner2014shape}.

\begin{theorem}{{\cite[Theorem 6.1]{miner2014shape}}}
Let $\tau_n$ be a uniformly random element of $\avn{321}$.  For fixed $0<\epsilon \leq 1/2$ we have
\[ \lim_{n\to \infty} \P\left( \tau_n(i) = i \textrm{ for some } i\in [\epsilon n, (1-\epsilon)n] \right) =0 .\]
\end{theorem}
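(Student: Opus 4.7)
The plan is to deduce this as a soft consequence of the Brownian excursion scaling limit recalled in the introduction. Write
\[ Z_n(t) := \frac{1}{\sqrt{2n}}\, |\tau_n(\lfloor nt \rfloor) - \lfloor nt \rfloor|,\qquad t\in[0,1],\]
so that, according to the result of \cite{hrs1}, $Z_n$ converges in distribution to standard Brownian excursion $\exc$. The mechanism is that a fixed point $\tau_n(i)=i$ corresponds to a zero of $Z_n$ at $t = i/n$, whereas $\exc$ is strictly positive on $(0,1)$ almost surely. A compact subinterval of $(0,1)$ therefore has no hope of witnessing a zero in the limit.

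To make this precise I would work with the continuous functional $\Phi_\epsilon(f) := \inf_{t\in[\epsilon,1-\epsilon]} f(t)$. Viewing $Z_n$ as an element of $D[0,1]$ (or linearly interpolating to land directly in $C[0,1]$), the continuous mapping theorem gives $\Phi_\epsilon(Z_n) \overset{d}{\longrightarrow} \Phi_\epsilon(\exc)$, and strict positivity of $\exc$ on the compact set $[\epsilon,1-\epsilon] \subset (0,1)$ implies $\Phi_\epsilon(\exc) > 0$ almost surely. The existence of a fixed point in $[\epsilon n,(1-\epsilon)n]$ forces $\Phi_\epsilon(Z_n)=0$, so for every $\delta>0$,
\[\P\bigl(\exists\, i \in [\epsilon n,(1-\epsilon)n]: \tau_n(i)=i\bigr) \;\le\; \P\bigl(\Phi_\epsilon(Z_n)\le\delta\bigr).\]
Applying the portmanteau theorem at continuity points of the distribution of $\Phi_\epsilon(\exc)$ and then letting $\delta \downarrow 0$ completes the argument, since $\P(\Phi_\epsilon(\exc)=0)=0$.

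There is really no serious obstacle here: the result falls out of the excursion limit combined with the soft fact that Brownian excursion does not vanish in the interior. The only minor technical point is choosing a topology in which both the FCLT holds and $\Phi_\epsilon$ is continuous at the limit, and the Skorokhod topology on $D[0,1]$ is adequate because the limit lies in $C[0,1]$ and continuity of $\Phi_\epsilon$ at continuous functions is standard. Note that this theorem is strictly weaker than the main result of the present paper, Theorem~\ref{fp_distro}, which pins down the joint law of fixed points near each endpoint and in particular shows they occur at $O(1)$ distance from $1$ and $n$ rather than merely at $o(n)$ distance.
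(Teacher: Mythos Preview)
Your argument is correct: the functional $\Phi_\epsilon$ is continuous at continuous functions in the Skorokhod topology, the excursion limit lies in $C[0,1]$, and portmanteau applied to the closed set $\{0\}$ (or the $\delta\downarrow 0$ argument you wrote) finishes the job. In fact the paper itself sketches exactly this heuristic in the introduction when it says that from the excursion picture ``the only information about the fixed points we can obtain \dots\ is that they must occur near the beginning or end of the permutation since Brownian excursion is strictly positive except at the end points.''

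Note, however, that the paper does not actually supply a proof of this statement: it is quoted verbatim from \cite{miner2014shape} as the strongest \emph{prior} result, purely for contrast. The paper's own contribution is the strictly stronger Corollary immediately following, which it obtains not from the excursion limit but as an immediate consequence of Theorem~\ref{fp_distro} (the local-limit/size-biased-tree description of the empirical fixed-point measure). That route buys more: it shows the fixed points live at $O(1)$ distance from the endpoints, so one may replace $[\epsilon n,(1-\epsilon)n]$ by any window $[a_n,b_n]$ with $a_n\to\infty$ and $n-b_n\to\infty$. Your excursion argument, while perfectly valid for the Miner--Pak statement as written, cannot see below the $\sqrt{n}$ scale and therefore cannot recover that sharper corollary---which is precisely the point the authors are making by juxtaposing the two.
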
 

From our Theorem \ref{fp_distro}, we immediately obtain the following stronger result.

\begin{corollary}
Let $\tau_n$ be a uniformly random element of $\avn{321}$ and let $0< a_n \leq b_n$ be sequences of integers.  Then
\[ \lim_{n\to \infty} \P\left( \tau_n(i) = i \textrm{ for some } i\in [a_n, b_n] \right) =0\]
if and only if $a_n \to \infty$ and $n-b_n \to \infty$.
\end{corollary}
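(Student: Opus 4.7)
The plan is to deduce the corollary from Theorem \ref{fp_distro} by translating statements about fixed points into statements about the two empirical measures appearing there. Write $F_n^{(1)}, F_n^{(2)}$ for the measures on the left-hand side of Theorem \ref{fp_distro} and $F^{(1)}, F^{(2)}$ for their respective limits. Since the stated topology on $\mathcal{M}_F(\{1,2,3,\dots\})$ is generated by integration against bounded functions and the underlying space is discrete, the evaluation $\mu \mapsto \mu(A)$ is continuous for every $A \subseteq \{1,2,3,\dots\}$. The continuous mapping theorem then yields $F_n^{(j)}(A) \overset{d}{\to} F^{(j)}(A)$ for each fixed $A$, and since these are nonnegative-integer-valued, this upgrades to $\P(F_n^{(j)}(A) \geq 1) \to \P(F^{(j)}(A) \geq 1)$.

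For sufficiency, assume $a_n \to \infty$ and $n - b_n \to \infty$. Splitting $[a_n, b_n]$ at $\lfloor n/2 \rfloor$ and using a union bound, it suffices to prove both $\P(\exists i \in [a_n, \lfloor n/2 \rfloor]: \tau_n(i) = i)$ and $\P(\exists i \in [\lfloor n/2 \rfloor + 1, b_n]: \tau_n(i) = i)$ tend to $0$. Given $\epsilon > 0$, pick $K$ with $\P(F^{(1)}([K,\infty)) \geq 1) < \epsilon/2$; this is possible because $F^{(1)}$ has a.s.\ finite total mass (bounded by $N$) and hence a.s.\ bounded support. For $n$ large enough that $a_n \geq K$, bound the first probability above by $\P(F_n^{(1)}([K,\infty)) \geq 1)$, which by the first paragraph converges to $\P(F^{(1)}([K,\infty)) \geq 1) < \epsilon/2$. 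The second probability is handled identically using $F^{(2)}$ together with the assumption $n - b_n \to \infty$.

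For necessity, argue by contrapositive: if $a_n \not\to \infty$, extract a subsequence along which $a_n$ equals some fixed integer $a \geq 1$. On this subsequence,
\[\P\bigl(\exists i \in [a_n, b_n]: \tau_n(i) = i\bigr) \geq \P(\tau_n(a) = a) = \P(F_n^{(1)}(\{a\}) \geq 1) \longrightarrow \P(F^{(1)}(\{a\}) \geq 1),\]
so it suffices to check the limit is strictly positive for every $a \geq 1$. This is immediate for $a = 1$, and for $a \geq 2$ the event $\{N \geq 2,\, X_1 = a-1,\, X_2 = 1\}$ has positive probability (by independence and the explicit formula for the law of the $X_j$) and forces $F^{(1)}$ to have a unit atom at $S_2 = a$, where $S_k = \sum_{j=1}^k X_j$. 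The symmetric case $n - b_n \not\to \infty$ is handled by an identical argument using $F^{(2)}$. The only real subtlety is that the intervals $[a_n, b_n]$ move with $n$, so one cannot directly appeal to convergence of $\mu_n(A)$ for a fixed set $A$; the remedy is to dominate by a fixed tail $[K, \infty)$ whose limiting mass can be made arbitrarily small, which is possible precisely because $F^{(1)}$ and $F^{(2)}$ have a.s.\ bounded support.
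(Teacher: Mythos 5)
Your proof is correct and follows the route the paper intends: the corollary is stated as an immediate consequence of Theorem \ref{fp_distro}, and your argument (continuity of $\mu\mapsto\mu(A)$ in the weak topology on a discrete space, domination of the moving windows by a fixed tail $[K,\infty)$ whose limiting mass is small since the limit measures are finite counting measures, and positivity of $\P(F^{(1)}(\{a\})\geq 1)$ via explicit events in $(N,X_1,X_2)$ for the converse) supplies exactly the details the paper leaves implicit. No gaps.
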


If we only wish to know the limiting distribution of the number of fixed points at the beginning and the end of the permutation we have a much simpler solution:

\begin{corollary} \label{321count}
	Let $A$ and $B$ be independent Geometric(2/3) random variables.  If $\tau_n$ is a uniformly random element of $\avn{321}$ then 
	\beq \left( \sum_{i=1}^{\lfloor n/2\rfloor} \cf_{\{\tau_n(i)=i\}} , \sum_{i=\lfloor n/2\rfloor+1}^n  \cf_{\{\tau_n(i)=i\}} \right)
\overset{d}{\longrightarrow} \left( A, B \right).
\eeq

\end{corollary}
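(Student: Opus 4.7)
The plan is to deduce Corollary \ref{321count} as an immediate consequence of Theorem \ref{fp_distro} by projecting the limiting joint empirical measure of fixed-point locations onto its total mass. The number of fixed points of $\tau_n$ in the first half $[1,\lfloor n/2\rfloor]$ is exactly the total mass of the first empirical measure appearing in Theorem \ref{fp_distro}, and likewise for the second half. Since the topology of weak convergence on $\mathcal{M}_F(\{1,2,3,\dots\})$ is generated by integration against bounded functions and the constant function $1$ is bounded, the total-mass functional $\mu \mapsto \mu(\{1,2,3,\dots\}) = \int 1\, d\mu$ is continuous. Applying it coordinate-wise, the continuous mapping theorem together with Theorem \ref{fp_distro} gives
\[
\left( \sum_{i=1}^{\lfloor n/2\rfloor} \cf_{\{\tau_n(i)=i\}},\ \sum_{i=\lfloor n/2\rfloor+1}^{n} \cf_{\{\tau_n(i)=i\}} \right) \overset{d}{\longrightarrow} \left( \sum_{k=1}^{N} \cf_{\{X_k=1\}},\ \sum_{k=1}^{M} \cf_{\{Y_k=1\}} \right).
\]

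Call the limiting random variables $A$ and $B$. To identify them as independent $\mathrm{Geom}(2/3)$ variables, I would first compute
\[
\P(X_1=1) = \frac{1}{1\cdot 2^{1}}\binom{0}{0} = \frac{1}{2}
\]
directly from the formula in Theorem \ref{fp_distro}, so that $(\cf_{\{X_k=1\}})_{k\geq 1}$ is an i.i.d.\ Bernoulli$(1/2)$ sequence independent of $N\sim\mathrm{Geom}(1/2)$. Using the probability generating function $\E[s^N] = 1/(2-s)$, conditioning on $N$ yields
\[
\E[s^A] = \E\!\left[\left(\frac{1+s}{2}\right)^{\!N}\right] = \frac{1}{2 - \tfrac{1+s}{2}} = \frac{2/3}{1 - s/3},
\]
which is the p.g.f.\ of a $\mathrm{Geom}(2/3)$ random variable on $\{0,1,2,\dots\}$. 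The same calculation with $(Y_k,M)$ in place of $(X_k,N)$ shows $B\sim\mathrm{Geom}(2/3)$, and the joint independence of $A$ and $B$ follows from the joint independence of $N$, $M$, and $((X_i,Y_i))_{i\geq 1}$ assumed in Theorem \ref{fp_distro}.

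There is essentially no obstacle once Theorem \ref{fp_distro} is in hand: the corollary just records the well-known fact that independent Bernoulli thinning of a geometric random variable is again geometric, and the specific parameter $2/3$ emerges from combining the success probability $\P(X_1=1)=1/2$ with the $\mathrm{Geom}(1/2)$ random number of trials. The only conceptual point worth flagging in the write-up is the continuity of the total-mass map under the weak topology described after Theorem \ref{fp_distro}, which is immediate once one recalls that the topology is generated by bounded test functions rather than bounded continuous test functions.
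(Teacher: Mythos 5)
Your proposal is correct and follows essentially the same route as the paper: project the convergence in Theorem \ref{fp_distro} onto total masses (the paper does this implicitly; your explicit appeal to the continuity of $\mu\mapsto\int 1\,d\mu$ is a slight improvement) and then identify the law of a Bernoulli$(1/2)$-thinned Geometric$(1/2)$ count as Geometric$(2/3)$. The only difference is cosmetic: you verify the thinning identity via probability generating functions, whereas the paper computes $\P(Y=k)=\sum_t \P(N=t)\binom{t}{k}2^{-t}=2\cdot 3^{-k-1}$ directly.
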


The asymptotic distribution of the total number of fixed points can be also be found using the continuity of addition. 

\begin{corollary}\label{321total}

Let $Z$ be distributed as $\P(Z=k) = \frac{4}{9}(k+1)\left(\frac{1}{3}\right)^k$ and let $A$ and $B$ be independent Geometric(2/3) random variables.  If $\tau_n$  is a uniform random element of $\avn{321}$ then

\beq 
\sum_{i=1}^n \cf_{\{\tau_n(i) = i\}} \overset{d}\longrightarrow A+B \overset{d}{=} Z.
\eeq
	
\end{corollary}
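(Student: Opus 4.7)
The plan is to read the statement as two assertions glued by equality in distribution, and to dispatch each separately. The distributional convergence $\sum_{i=1}^n \cf_{\{\tau_n(i)=i\}} \overset{d}{\longrightarrow} A+B$ will follow immediately from Corollary \ref{321count} together with the continuous mapping theorem: Corollary \ref{321count} asserts joint convergence in distribution of the pair $(L_n, R_n) = \bigl( \sum_{i=1}^{\lfloor n/2 \rfloor} \cf_{\{\tau_n(i)=i\}}, \sum_{i=\lfloor n/2 \rfloor +1}^{n} \cf_{\{\tau_n(i)=i\}} \bigr)$ to $(A,B)$ on $\mathbb{Z}_{\geq 0}^2$, and since the total fixed-point count is exactly $L_n+R_n$ and addition $(a,b)\mapsto a+b$ is continuous on $\mathbb{Z}_{\geq 0}^2$, the image measures converge to the law of $A+B$.

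It remains to verify the identity in distribution $A+B \overset{d}{=} Z$. Since $A$ and $B$ are independent Geometric$(2/3)$ random variables supported on $\{0,1,2,\dots\}$, each has $\P(A=j) = \tfrac{2}{3}\bigl(\tfrac{1}{3}\bigr)^j$, and the convolution formula gives
\[
\P(A+B=k) \;=\; \sum_{j=0}^k \P(A=j)\P(B=k-j) \;=\; \sum_{j=0}^k \Bigl(\tfrac{2}{3}\Bigr)^2 \Bigl(\tfrac{1}{3}\Bigr)^k \;=\; \frac{4}{9}(k+1)\Bigl(\tfrac{1}{3}\Bigr)^k,
\]
which is exactly the prescribed law of $Z$.

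There is essentially no obstacle here; the corollary is extracted from Corollary \ref{321count} by applying a continuous functional plus a one-line convolution. The only thing to be careful about is to invoke \emph{joint} convergence of the pair $(L_n, R_n)$, not merely marginal convergence, so that continuous mapping genuinely delivers convergence of the sum. This is exactly what Corollary \ref{321count} supplies.
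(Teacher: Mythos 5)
Your proof is correct and follows exactly the route the paper intends: the paper derives Corollary \ref{321total} from Corollary \ref{321count} ``using the continuity of addition,'' i.e.\ the continuous mapping theorem applied to the jointly convergent pair, followed by the same one-line convolution of two independent Geometric$(2/3)$ laws. Your emphasis on needing joint (not merely marginal) convergence is the right point of care, and the computation $\sum_{j=0}^k \frac{4}{9}\left(\frac{1}{3}\right)^k = \frac{4}{9}(k+1)\left(\frac{1}{3}\right)^k$ checks out.
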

\noindent We remark that $Z$ follows a NegativeBinomial$(2,1/3)$ distribution.  Although Corollary \ref{321total} seems to be new, and we derived from our explicit description of the limiting empirical distribution of fixed points, it can also be found by analyzing the bivariate generating function for fixed points given by \cite[Theorem 2.9]{elizaldethesis}.

Using one of the many bijections \cite{krattenthaler2001permutations,elizalde2004bijections,robertson2002refined} from $\avn{321}$ to $\avn{132}$ or $\avn{213}$ that preserves the number of fixed points we have the following.

\begin{corollary} \label{132count}
Let $Z$ be distributed like the sum of two independent Geometric$(2/3)$ random variables, i.e. $\prob(Z=k) = \frac{4}{9}(k+1) \left(\frac{1}{3}\right)^k$.  If $\pi_n$ is a uniformly random element in $\avn{132}$ or $\avn{213}$ then
$$\left(\sum_{i=1}^n \cf_{\{\pi_n(i) = i\}}\right) \overset{d}{\longrightarrow} Z.$$
\end{corollary}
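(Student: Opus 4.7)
The plan is to deduce Corollary \ref{132count} from Corollary \ref{321total} by transporting the limiting distribution of the fixed point count across a fixed-point-preserving bijection. Specifically, by Robertson--Saracino--Zeilberger \cite{robertson2002refined} (or alternatively Elizalde--Pak \cite{elizalde2004bijections}, or Krattenthaler \cite{krattenthaler2001permutations}), there is for each $n$ an explicit bijection $\Phi_n\colon \avn{321}\to \avn{132}$ with the property that the number of fixed points is preserved, i.e.\ $\#\{i:\Phi_n(\tau)(i)=i\}=\#\{i:\tau(i)=i\}$ for every $\tau\in\avn{321}$. An analogous bijection onto $\avn{213}$ is provided by the same references (or one composes with the reverse-complement symmetry, which sends $\avn{132}$ to $\avn{213}$ and preserves fixed points because it is an involution fixing the diagonal).

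First I would record that $\Phi_n$ transports the uniform distribution on $\avn{321}$ to the uniform distribution on $\avn{132}$, since both domains have the same cardinality $C_n$ and $\Phi_n$ is a bijection. Therefore if $\tau_n$ is uniform on $\avn{321}$, then $\pi_n:=\Phi_n(\tau_n)$ is uniform on $\avn{132}$, and by the fixed-point-preserving property the random variables
\[
F(\pi_n):=\sum_{i=1}^n \cf_{\{\pi_n(i)=i\}} \quad\text{and}\quad F(\tau_n):=\sum_{i=1}^n \cf_{\{\tau_n(i)=i\}}
\]
are equal almost surely (and hence have the same distribution for every $n$).

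Next I would invoke Corollary \ref{321total}, which says $F(\tau_n)\overset{d}\to Z$ where $Z$ is NegativeBinomial$(2,1/3)$ with mass function $\P(Z=k)=\tfrac{4}{9}(k+1)(1/3)^k$. Equality in distribution of $F(\pi_n)$ and $F(\tau_n)$ for each $n$ immediately gives $F(\pi_n)\overset{d}\to Z$. The argument for $\avn{213}$ is identical using the corresponding bijection.

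The proof is essentially a transfer of results, so there is no real obstacle provided one is willing to cite the classical bijections. The only point one might want to double-check is that the particular bijections cited do in fact send the uniform measure to the uniform measure (which is automatic for any bijection between finite sets of the same size) and that the fixed-point preservation is exact, not merely in distribution; both properties are verified in the cited references for the specific bijections in question, so no additional work is needed.
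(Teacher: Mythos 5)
Your argument is exactly the one the paper intends: the corollary is stated as an immediate consequence of Corollary \ref{321total} via the fixed-point-preserving bijections of \cite{krattenthaler2001permutations,elizalde2004bijections,robertson2002refined}, together with the (automatic) fact that a bijection between equinumerous finite sets pushes the uniform measure to the uniform measure. Your additional remark that $\avn{213}$ is handled by composing with reverse-complement, which preserves the fixed-point count, is a correct and slightly more explicit justification than the paper gives.
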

\noindent Unlike the results for $\avn{321}$, these bijections do not easily give informations about the location of the fixed points for random permutations in $\avn{132}$ or $\avn{213}$.

\section{A bijection between rooted plane trees and $\avn{321}$}\label{treebijection}

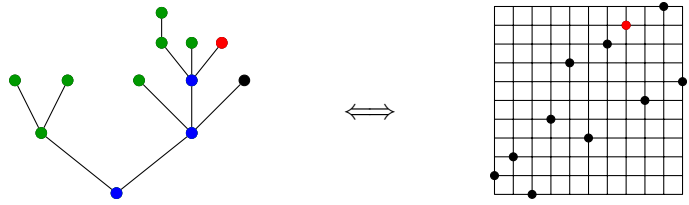
\begin{figure}

\begin{tikzpicture}[
every fit/.style={ellipse,draw,inner sep=2pt},
grow'=up,
interior/.style={draw, fill, circle, minimum size=4pt, inner sep=0pt, }, 
leaf/.style={draw, fill, circle, minimum size=4pt, inner sep =1pt},
subtree/.style={draw, circle, inner sep=1pt},level 1/.append style={sibling distance=20mm, level distance= 8mm},
level 2/.append style={sibling distance=7mm, level distance =7mm},
level 3/.append style={sibling distance=4mm, level distance = 5mm},
level 4/.append style={sibling distance=2mm, level distance = 4mm},
]
\node (root) [interior] {} 

 child{ node (7)[interior] {} 
  child{node (0) [leaf] {}
  }
  child{node (1) [leaf] {}
  }
 }
 child{ node (8)[interior] {} 
  child{node (2) [leaf] {}
  }
  child{ node (9)[interior] {} 
   child{ node (10)[interior] {} 
    child{node (3) [leaf] {}
    }
   }
   child{node (4) [leaf] {}
   }
   child{node (5) [leaf] {}
   }
  }
  child{node (6) [leaf] {}
  }
 }
;

	\node (h5) [interior, color=red,] at (5) {};

	\node (hr) [interior,color = green!60!black] at (root) {};
	\node (h1) [leaf, color=green!60!black] at (1) {};
	\node (h2) [leaf, color=green!60!black] at (2) {};
	\node (h3) [leaf, color=green!60!black] at (3) {};
	\node (h4) [leaf, color=green!60!black] at (4) {};
	\node (h7) [interior, color=green!60!black] at (7) {};
	\node (h8) [interior, color=green!60!black] at (8) {};
	\node (h9) [interior, color=green!60!black] at (9) {};
	\node (h10) [interior, color=green!60!black] at (10) {};
	\node (h11) [leaf, color=green!60!black] at (0) {};

	\node (br) [interior, color = blue] at (root) {};
	\node (b8) [interior, color = blue] at (8) {};
	\node (b9) [interior, color = blue] at (9) {};
	
\end{tikzpicture}
\hspace{.75cm}
\begin{tikzpicture}
\node (base) at (0,0) {};
\node (iff) at (0,1) {$\iff$};
\end{tikzpicture}	
\hspace{.75cm}
\begin{tikzpicture}[scale =.25,
interior/.style={draw, fill, circle, minimum size=3pt, inner sep=0pt, },]
\draw (1,1) grid (11,11);
	\node (l1) [interior] at (1,2) {};
	\node (l2) [interior] at (2,3) {};
	\node (l3) [interior] at (4,5) {};
	\node (l4) [interior] at (5,8) {};
	\node (l5) [interior] at (7,9) {};
	\node (l6) [interior] at (8,10) {};
	\node (l7) [interior] at (10,11) {};
	\node (i1) [interior] at (3,1) {};
	\node (i2) [interior] at (6,4) {};
	\node (i3) [interior] at (9,6) {};
	\node (i4) [interior] at (11,7) {};
	\node (r8) [interior, color = red] at (l6) {};
	
\end{tikzpicture}

\caption{ A rooted tree $\mathbf{t}$ and a corresponding permutation $\tau_n.$}
\label{fig bijection}
\end{figure}

Let $\fT_{n+1}$ denote the set of rooted plane trees with $n+1$.  We will describe a bijection with rooted plane trees and $\avn{321}$ that maps the leaves of the tree to the left-to-right maxima of the corresponding permutation, see Figure \ref{fig bijection}.  This bijection follows from a bijection with $\avn{321}$ and Dyck paths of length $2n$ found in \cite{elizalde2004bijections}.

Suppose $\bt$ has $k$ leaves and let $\ell(\bt) = (l_1,l_2,\cdots,l_k)$ be the list of leaves in $\bt$, listed in order of appearance on the depth-first walk of $\bt$.  There are two quantities associated with the leaves of the tree needed to construct the bijection.  Given the ordering on the vertices of $\bt$ given by traversing the exterior of the tree from left to right, starting from the root, let $s_i$ equal the number of vertices in $\bt$ less than $l_i$.  Let $p_i$ denote the number of vertices less than $l_i$ in the path from the root to $l_i$.  

Construct two increasing sequences of size $k$, $A = \{s_i\}_{i=1}^k$ and $B = \{ s_i - p_i + 1\}_{i=1}^k.$  We define $\tau_\bt$ pointwise on $B$
$$\tau_\bt( s_i-p_i + 1 ) = s_i$$ for $1\leq i \leq k$.  We then extend $\tau_\bt$ to $[n]\backslash B$ by assigning values of $[n]\backslash A$ in increasing order.  

The resulting function $\tau_\bt$ is a permutation in $\avn{321}$.  The set of values for $s_i$ and $p_i$ uniquely determine a rooted plane tree, so we may recover the corresponding rooted tree from the left-to-right maxima of a permutation in $\tau_\bt$.  Let $M=\{m_i\}_{i=1}^k$ denote the set of indices of the left-to-right maxima of $\tau_\bt$. Then
$$s_i = \tau_\bt(m_i)$$
and 
$$p_i = \tau_\bt(m_i) - m_i + 1.$$

Finally we note that fixed points in $\tau_\bt$ correspond to leaves $l_i$ of $\bt$ such that $p_i = 1$.

\section{The Local Limit Theorem}\label{locallimit}
In this section we introduce the appropriate limit for a sequence of trees $(\bt_n)_{n\geq 1}$ such that $\bt_n\in\fT_n$.  For this we use Neveu's formalism for rooted ordered trees \cite{N86}.  This will give us a consistent way to refer to thte vertices of different trees.  We let
\[ \cU = \bigcup_{n\geq 0} \{1,2,\dots\}^n,\]
where $\{1,2,\dots\}^0 = \{\emptyset\}$.  For $u\in\cU$, we let $|u|$ be the length of $u$.  That is, $|u|$ is the unique integer such that $u\in \{1,2,\dots\}^{|u|}$.  We treat the elements of $\cU$ as lists, and for $u,v\in \cU$, we denote the concatenation of $u$ and $v$ by $uv$.  The empty list is the identity for this operation, that is, if $u=\emptyset$ then $uv=vu=v$.  The set of ancestors of $u$ is defined to be
\[ A_u = \{ v \in \cU  : \exists w\in \cU \textrm{ such that } u=vw\}.\]
We can consider $\cU$ as the vertex set of a rooted ordered tree $U_\infty$, called the Ulam-Harris tree, where we take the lexicographic order on $\cU$ as the order, $\emptyset$ as the root, and for every $u\in \cU$ and $i\in \{1,2,3,\dots\}$ there is an edge from $u$ to $ui$.  By a rooted, ordered tree we will mean a (possibly infinite) subtree $\bt$ of $U_\infty$ such that
\begin{enumerate}
\item $\emptyset\in \bt$.
\item If $u\in\bt$ then $A_u\subseteq \bt$.
\item For every $u\in  \bt$, if $ui\in \bt$ then $uj\in \bt$ for all $j \leq i$.
\end{enumerate}

A tree $\bt\subseteq U_\infty$ is considered as rooted and ordered by taking $\emptyset$ to be the root and using the order on $\bt$ induced by the lexicographic order on $\cU$.  If $u\in \bt$, the vertices in $A_u$ are the vertices in $\bt$ on the path from the root to $u$.  For a tree $\bt$ and $u\in \bt$, define $d_\bt(u)=0 \vee \sup\{ i : ui\in\bt\}$.  We also define $d_\bt(u)=-1$ if $u\notin \bt$.  A tree $\bt$ is called \textit{locally finite} if $d_\bt(u)<\infty$ for all $u\in\cU$ and define
\[ \fT = \{ \bt\subseteq U_\infty : \bt \textrm{ is locally finite}\}.\]
We topologize $\fT$  with the convergence $\bt_n\rightarrow \bt$ if $d_{\bt_n}(u) \rightarrow d_{\bt}(u)$ for all $u\in \cU$.  Since $\cU$ is countable this is convergence can be metrized by, for example, the complete metric
\[ D(\bt,\mathbf{s}) = \sum_{i=1}^\infty \frac{ |d_{\bt}(f(i)) - d_{\mathbf{s}}(f(i))| \wedge 1}{2^i},\]
where $f: \{1,2,\dots, \}\to \cU$ is a bijection.

Let $\xi$ be a probability distribution on $\{0,1,2,\dots\}$ with mean $1$, ie. $\sum_{i} i\xi(i) =1$.  A $\fT$-valued random variable is called a Galton-Watson tree with offspring distribution $\xi$ if
\[ \P(T = \bt) = \prod_{u\in \bt} \xi(d_\bt(u))\]
for all $\bt \in \fT$.  For $\bt\in\fT$, let $\bt^{[k]}$ be the subtree of $\bt$ comprised of vertices of height at most $k$.  Let $T$ be a $\xi$-Galton-Watson tree and define the probability measure $\nu_\xi$ on $\fT$ to be the unique measure on infinite trees such that for every $k$ and $\bt_0\in \fT$ with height $k$, 
\[ \nu_\xi(\{ \bt \in \fT : \bt^{[k]} = \bt_0\}) = (\#_k\bt_0) \P( T^{[k]} = \bt_0),\]
where $\#_k\bt$ is the number of vertices in $\bt$ with height $k$.  See e.g. \cite[Lemma 1.14]{Kesten86} for the existence and uniqueness of this measure.    Let $\tilde T$ have distribution $\nu_\xi$.  $\tilde T$ is called a size-biased $\xi$-Galton-Watson tree.  The size biased distribution of $\xi$ is the distribution
\[ \tilde \xi(n) = n\xi(n).\]
Note that this is a probability distribution because $\xi$ has mean $1$.  Observe that, directly from the definition of $\nu_\xi$, we have
\[ \P(\#_1 \tilde T = k) = \tilde \xi(k),\]
which justifies calling $\tilde T$ the size-biased tree.  For trees $\bt_1,\dots, \bt_k$, let $<\bt_1,\dots,\bt_k>$ be the rooted ordered tree obtained by attaching the roots of $\bt_1,\dots, \bt_k$ to a new vertex and calling that vertex the root.  Formally, for $\bt\in \fT$ and $i \in \{1,2,\dots\}$, we let $i\bt = \{ iv \in \cU : v\in \bt\}$ and define $<\bt_1,\dots,\bt_k>$ as the rooted, ordered tree with vertex set
\[ \{\emptyset\} \cup \bigcup_{i=1}^k i\bt_i.\]

For our purposes, $\tilde T$ has two important properties, which we summarize in the next two lemmas.
\begin{lemma}[{\cite[Lemma 2.2]{Kesten86}} ]\label{lemma sbgwdecomp}
Let $T_1,T_2,\dots$ be an i.i.d sequence of $\xi$-Galton-Watson trees, let $\tilde T$ be an independent size-biased $\xi$-Galton-Watson trees and let $(\tilde X, N)$ be independent of $\tilde T, T_1,T_2,\dots$ and distributed such that $\tilde X$ has distribution $\tilde \xi$ and conditionally give $\tilde X=k$, $N$ is uniformly distributed on $\{1,2,\dots, k\}$.  Then $\tilde T$ is distributed like
\[ \tilde T \ =_d \ <T_1,\dots, T_{N-1}, \tilde T, T_{N+1},\dots, T_{\tilde X}>. \]
\end{lemma}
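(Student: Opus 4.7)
The strategy is to verify that the random tree on the right-hand side, which I will denote
\[\tilde T' \;:=\; \langle T_1,\ldots,T_{N-1},\tilde T,T_{N+1},\ldots,T_{\tilde X}\rangle,\]
has distribution $\nu_\xi$. Because $\nu_\xi$ is characterized by its values on cylinder events $\{\bt:\bt^{[k]}=\bt_0\}$ as $k$ and $\bt_0$ vary (these generate the Borel $\sigma$-algebra coming from the metric $D$), it suffices to check, for every $k\geq 0$ and every finite $\bt_0\in\fT$ of height $k$, the identity
\[\P\bigl((\tilde T')^{[k]}=\bt_0\bigr)\;=\;(\#_k\bt_0)\,\P\bigl(T^{[k]}=\bt_0\bigr).\]

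First I would fix such a $\bt_0$ with $k\geq 1$, write $j:=d_{\bt_0}(\emptyset)$, and let $\bt_0^1,\ldots,\bt_0^j$ be the subtrees rooted at the children of the root of $\bt_0$, each of height at most $k-1$. Since $\tilde X$ is a.s.\ finite, the event $(\tilde T')^{[k]}=\bt_0$ forces $\tilde X=j$; the $N$-th root-subtree of $\tilde T'$ (truncated at height $k-1$) is then $\tilde T^{[k-1]}$, and the remaining root-subtrees are $T_i^{[k-1]}$ for $i\neq N$. By the joint independence of $(\tilde X,N)$, $\tilde T$, and the sequence $T_1,T_2,\ldots$, I would obtain
\[\P\bigl((\tilde T')^{[k]}=\bt_0\bigr)\;=\;\tilde\xi(j)\sum_{n=1}^{j}\tfrac{1}{j}\,\P\bigl(\tilde T^{[k-1]}=\bt_0^n\bigr)\prod_{i\neq n}\P\bigl(T^{[k-1]}=\bt_0^i\bigr).\]

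Next I would substitute $\tilde\xi(j)=j\xi(j)$ (the $j$ exactly cancels the $1/j$) and apply the defining property of $\nu_\xi$ to rewrite $\P(\tilde T^{[k-1]}=\bt_0^n)=(\#_{k-1}\bt_0^n)\,\P(T^{[k-1]}=\bt_0^n)$. After these substitutions every term in the sum contains the full product $\prod_{i=1}^{j}\P(T^{[k-1]}=\bt_0^i)$, so the expression telescopes to
\[\xi(j)\left(\sum_{n=1}^{j}\#_{k-1}\bt_0^n\right)\prod_{i=1}^{j}\P\bigl(T^{[k-1]}=\bt_0^i\bigr)\;=\;(\#_k\bt_0)\,\P\bigl(T^{[k]}=\bt_0\bigr),\]
using that $\sum_{n=1}^{j}\#_{k-1}\bt_0^n=\#_k\bt_0$ and that the branching property gives $\P(T^{[k]}=\bt_0)=\xi(j)\prod_{i=1}^{j}\P(T^{[k-1]}=\bt_0^i)$. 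The base case $k=0$ is immediate since both sides equal $\mathbf{1}\{\bt_0=\{\emptyset\}\}$.

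The computation is, in the end, pure bookkeeping arranged around the size-biasing identity $\tilde\xi(j)=j\xi(j)$, which is precisely what converts the factor $j$ (coming from having one of the $j$ offspring be the spine vertex) into the uniform choice of $N$. The only genuine thing to verify, rather than compute, is that $\tilde T'$ is almost surely locally finite so that it defines an honest random element of $\fT$; this is immediate because $\tilde X$ is a.s.\ finite and each of $\tilde T,T_1,T_2,\ldots$ is a.s.\ locally finite, and local finiteness of a tree is equivalent to local finiteness of each of its root-subtrees together with finiteness of its root-degree. No step is a serious obstacle; the content of the lemma lies in the clean probabilistic interpretation of the identity, not in a delicate estimate.
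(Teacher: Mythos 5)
Your proof is correct, but note that the paper does not actually prove this lemma: it simply cites \cite[Lemma 2.2]{Kesten86} and points to the spine constructions in \cite{LPP95} and \cite{J12}, from which the decomposition is read off as a structural fact about how $\tilde T$ is built (an infinite spine with independent unconditioned Galton--Watson trees hanging off it). What you do instead is a self-contained verification directly from the paper's own definition of $\nu_\xi$ via its cylinder probabilities $\nu_\xi(\{\bt:\bt^{[k]}=\bt_0\})=(\#_k\bt_0)\P(T^{[k]}=\bt_0)$: you compute the corresponding probabilities for $\langle T_1,\dots,T_{N-1},\tilde T,T_{N+1},\dots,T_{\tilde X}\rangle$ and match them, with the size-biasing identity $\tilde\xi(j)=j\xi(j)$ cancelling the $1/j$ from the uniform choice of $N$ and $\sum_n\#_{k-1}\bt_0^n=\#_k\bt_0$ closing the computation. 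This buys independence from the external construction at the cost of two small measure-theoretic points you should make explicit: (i) when a root-subtree $\bt_0^n$ has height strictly less than $k-1$, the identity $\P(\tilde T^{[k-1]}=\bt_0^n)=(\#_{k-1}\bt_0^n)\P(T^{[k-1]}=\bt_0^n)$ is not literally the defining property (which is stated for trees of exact height $k-1$), but both sides vanish because $\nu_\xi$ is carried by infinite trees, whose height-$(k-1)$ truncations have height exactly $k-1$; and (ii) the cylinder events with $\bt_0$ of exact height $k$ form a $\pi$-system that determines a probability measure only among measures supported on infinite trees, so you should observe that your $\tilde T'$ is a.s.\ infinite (it contains a copy of $\tilde T$), not merely locally finite. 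Neither point is a gap, and the argument as a whole is sound.
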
  

See also the discussions in Section 2 of \cite{LPP95} and Section 5 of \cite{J12} for a constructions of $\tilde T$ from which the lemma follows immediately.  For $\bt \in \fT$, let $\#\bt$ be the number of vertices of $\bt$.

\begin{lemma}\label{lemma local limit}
Let $T$ be a $\xi$-Galton-Watson tree and for $n$ such that $\P(\# T=n)>0$, let $T_n$ be distributed like $T$ conditioned on $\P(\# T=n)$.  Then
\[ T_n \overset{d}{\longrightarrow} \tilde T.\] 
That is, for every tree $\bt\in \fT$ and $k\in \{1,2,3,\dots\}$ we have
\[ \lim_{n\to\infty} \P\left(T_n^{[k]} = \bt\right)=\lim_{n\to\infty} \P\left(T^{[k]} = \bt \middle| \# T = n\right) = \P\left(\tilde T^{[k]} = \bt\right),\]
where the limit is understood to be along $n$ such that $\P(\# T=n)>0$.
\end{lemma}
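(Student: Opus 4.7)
My plan is to prove the convergence in the sense stated: for each fixed finite tree $\bt_0$ and each $k$, show that
\[ \P\!\left(T_n^{[k]} = \bt_0\right) \ \longrightarrow \ (\#_k \bt_0)\,\P\!\left(T^{[k]} = \bt_0\right),\]
which is exactly $\nu_\xi(\{\bt : \bt^{[k]}=\bt_0\})$. Since $\fT$ is topologized by the convergence $d_{\bt_n}(u)\to d_{\bt}(u)$ for every $u\in\cU$, and the events $\{\bt^{[k]}=\bt_0\}$ generate the Borel $\sigma$-algebra and form a convergence-determining class, this pointwise convergence of probabilities will yield $T_n \overset{d}{\to} \tilde T$.

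The first step is a direct decomposition. Write $m := \#_k \bt_0$ and let $v_1,\dots,v_m$ be the vertices of $\bt_0$ at height $k$. On the event $\{T^{[k]}=\bt_0\}$ the subtrees $T^{(1)},\dots,T^{(m)}$ rooted at $v_1,\dots,v_m$ are, by the branching property, i.i.d.\ $\xi$-Galton-Watson trees, and $\# T = \#\bt_0 - m + \sum_{i=1}^m \# T^{(i)}$. Hence
\[ \P\!\left(T^{[k]} = \bt_0,\, \#T = n\right) = \P\!\left(T^{[k]} = \bt_0\right)\,\P\!\left(\textstyle \sum_{i=1}^m \# T^{(i)} = n - \#\bt_0 + m\right),\]
so the conditional probability I need equals $\P(T^{[k]}=\bt_0)$ times the ratio
\[ R_n \ := \ \frac{\P\!\left(\sum_{i=1}^m \# T^{(i)} = n - \#\bt_0 + m\right)}{\P(\# T = n)}.\]
The whole proof reduces to showing $R_n \to m$.

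To estimate $R_n$ I will use the Otter--Dwass cycle lemma, which expresses both the numerator and denominator in terms of the random walk $S_j = \eta_1 + \cdots + \eta_j$ with i.i.d.\ increments distributed as $\xi$: namely
\[ \P(\# T = n) = \tfrac{1}{n}\P(S_n = n-1) \qquad\text{and}\qquad \P\!\left(\textstyle\sum_{i=1}^m \# T^{(i)} = j\right) = \tfrac{m}{j}\P(S_j = j-m).\]
Setting $j = n - \#\bt_0 + m$ gives
\[ R_n = \frac{m\,n}{n - \#\bt_0 + m}\cdot \frac{\P(S_{n-\#\bt_0+m}=n-\#\bt_0)}{\P(S_n = n-1)}.\]
The first factor tends to $m$. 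For the second factor I invoke the local central limit theorem for the random walk $S_j$: since $\xi$ has mean $1$ and, in the applications of interest, finite variance $\sigma^2$ and appropriate lattice span, both numerator and denominator are asymptotic to $(2\pi j \sigma^2)^{-1/2}$ with $j/n\to 1$, so their ratio tends to $1$.

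The main obstacle is the application of the local CLT: it requires enough regularity on $\xi$ (finite variance, plus control of the lattice so that the conditioning on $\#T=n$ is non-trivial for the relevant $n$). One must restrict to $n$ along which $\P(\#T=n)>0$, exactly as the lemma is stated, and verify that the shift by the constant $\#\bt_0 - m$ preserves lattice compatibility; this is automatic when $\xi$ is aperiodic and, for the paper's application ($\xi$ Geometric$(1/2)$), follows immediately. Combining the three factors in the expression for $R_n$ yields $R_n \to m$, hence $\P(T_n^{[k]}=\bt_0)\to m\,\P(T^{[k]}=\bt_0)$, which completes the proof.
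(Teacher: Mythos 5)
Your argument is correct for the case the paper actually needs, and it is worth noting that the paper itself offers no proof of this lemma: it only points to the literature (Kennedy, Aldous--Pitman, Kesten, Janson, Abraham--Delmas). What you have written is essentially the standard proof from those references: decompose over the fringe subtrees hanging at height $k$ to reduce the claim to $R_n\to m=\#_k\bt_0$, convert both tree-size probabilities to random-walk hitting probabilities via the Otter--Dwass (cycle lemma) identity, and finish with the local central limit theorem. Each step is sound: the branching-property factorization and the identity $\#T=\#\bt_0-m+\sum_i\#T^{(i)}$ are correct, the two Otter--Dwass formulas are the standard ones, and the prefactor $\frac{mn}{n-\#\bt_0+m}\to m$ together with the ratio of local CLT asymptotics gives the claim. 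The only place requiring care is the one you flag: the local CLT argument as written needs finite variance and a lattice-compatibility check between the points $n-1$ (for $S_n$) and $n-\#\bt_0$ (for $S_{n-\#\bt_0+m}$); in the periodic or infinite-variance setting one must work harder (this is what the modern references handle in general). Since the paper explicitly restricts to the finite-variance case and applies the lemma only with $\xi$ Geometric$(1/2)$, which is aperiodic, your proof fully covers what is used. One small point worth making explicit if you write this up: the events $\{\bt:\bt^{[k]}=\bt_0\}$ form a countable collection of clopen sets from which every basic open set of $\fT$ is a countable disjoint union, so convergence of their probabilities does imply convergence in distribution; this is the content of the ``that is'' reformulation in the statement.
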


This lemma has a long history.  The case when $\xi$ has finite variance, which is the only case we will need, was proven implicitly in \cite{K75} and first stated explicitly in \cite{AP98}.  A version for conditioning on $\#T\geq n$ rather than $\#T = n$ was given in \cite{Kesten86}.  See \cite[Theorem 7.1]{J12} and \cite{AD14} for modern approaches to the general result.

\section{Application to fixed points}\label{fixedpoints}
The connection between local limit theorems for Galton-Watson trees and fixed points of $321$-avoiding permutations comes from the following result.  For $n\in \{0,1,2,\dots \}$ define $\xi(n) = 2^{-n-1}$, i.e., $\xi$ has the Geometric$(1/2)$ distribution.  If $T$ is a $\xi$-Galton-Watson tree and $T_n$ is distributed like $T$ conditioned on $\# T=n$, then $T_n$ is a uniformly random rooted ordered tree with $n$ vertices.  Thus, $\tau_n:=\tau_{T_{n+1}}$ is a uniformly random element of $\avn{321}$.

From the local limit theorem for Galton-Watson trees, we obtain the following result.

\begin{theorem}\label{thm main tree}
Let $\cT_1,\cT_2,\dots$ be an i.i.d. sequence of Geometric$(1/2)$-Galton-Watson trees, let $(\tilde X, N)$ be independent of $\cT_1,\cT_2,\dots$ and distribution such that $\tilde X$ has a size-biased Geometric$(1/2)$ distribution and conditionally give $\tilde X=k$, $N$ is uniformly distributed on $\{1,2,\dots, k\}$.  If $\tau_n$ is a uniformly random element of $\avn{321}$ then
\begin{multline*} \left( \sum_{i=1}^{\lfloor n/2\rfloor} \delta_i \cf_{\{\tau_n(i)=i\}} , \sum_{i=\lfloor n/2\rfloor+1}^n \delta_{n+1-i} \cf_{\{\tau_n(i)=i\}} \right)\\ \overset{d}{\longrightarrow} \left( \sum_{i=1}^{N-1} \delta_{\sum_{j=1}^i \#\cT_j} \cf_{\{\#\cT_i=1\}}, \sum_{i=N+1}^{\tilde X} \delta_{\sum_{j=i}^{\tilde X} \#\cT_j} \cf_{\{\#\cT_{i}=1\}}\right),
\end{multline*}
the convergence being convergence in distribution on $\mathcal{M}_F(\{1,2,3,\dots\})^2$ where $\mathcal{M}_F(\{1,2,3,\dots\})$ is the set of finite measures on $\{1,2,3,\dots\}$ equipped with the topology of weak convergence.
\end{theorem}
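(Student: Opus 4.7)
The plan is to apply the bijection of Section \ref{treebijection} to convert the statement into one about leaves adjacent to the root of the uniform rooted plane tree $T_{n+1}$, and then to invoke Lemma \ref{lemma local limit} together with Lemma \ref{lemma sbgwdecomp}. Writing $X_n = d_{T_{n+1}}(\emptyset)$ and $\cT^{(n)}_1,\ldots,\cT^{(n)}_{X_n}$ for the subtrees of the root of $T_{n+1}$ in left-to-right order, the characterization at the end of Section \ref{treebijection} (a leaf $l_i$ has $p_i = 1$ iff it is a child of the root) identifies fixed points of $\tau_n$ with indices $k$ for which $\#\cT^{(n)}_k = 1$; the position of the associated fixed point is then $1 + \sum_{j < k}\#\cT^{(n)}_j = \sum_{j \leq k}\#\cT^{(n)}_j$. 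Since $\sum_{j=1}^{X_n}\#\cT^{(n)}_j = n$, the distance from the right end satisfies $n + 1 - i = \sum_{j=k}^{X_n}\#\cT^{(n)}_j$, so both empirical measures in the theorem are continuous functionals of the sequence $(X_n;\cT^{(n)}_1,\ldots,\cT^{(n)}_{X_n})$ alone.

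Next, Lemma \ref{lemma local limit} gives $T_{n+1} \overset{d}{\to} \tilde T$ in the local topology, which by Skorokhod representation I may assume holds almost surely; and by Lemma \ref{lemma sbgwdecomp}, $\tilde T = \langle \cT_1,\ldots,\cT_{N-1},\tilde T',\cT_{N+1},\ldots,\cT_{\tilde X}\rangle$ with the $\cT_j$ i.i.d. critical Geom$(1/2)$-Galton-Watson trees (each a.s. finite) and $\tilde T'$ an independent size-biased tree (a.s. infinite). Local convergence then gives $X_n \to \tilde X$, and because each finite tree is an isolated point in the local topology, for every $j \neq N$ the $j$-th subtree of $T_{n+1}$ coincides with $\cT_j$ for all large enough $n$. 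The unique remaining subtree $\cT^{(n)}_{N_n}$ then carries $n - O(1)$ vertices, and its index $N_n$ converges to $N$.

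Finally, since $\#\cT^{(n)}_{N_n} > n/2$ eventually, any index $k \geq N_n$ gives a fixed point (if any) at position $> n/2$, while indices $k \leq N_n - 1$ yield bounded positions $\sum_{j \leq k}\#\cT_j$. Therefore the left-half empirical measure equals $\sum_{k=1}^{N_n - 1} \delta_{\sum_{j \leq k}\#\cT^{(n)}_j} \cf_{\{\#\cT^{(n)}_k = 1\}}$ for large $n$ and, by the eventual equalities above, converges almost surely to $\sum_{k=1}^{N-1} \delta_{\sum_{j \leq k}\#\cT_j} \cf_{\{\#\cT_k = 1\}}$. The right-half measure is handled symmetrically via $n + 1 - i = \sum_{j=k}^{X_n}\#\cT^{(n)}_j$. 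The main obstacle I expect is the joint-convergence step, namely identifying $N_n$ with $N$ while the $N_n$-th subtree swells to size $n$; this is tamed by the key observation that finite subtrees are isolated points in the local topology, so convergence upgrades to eventual equality under the Skorokhod coupling, after which everything reduces to the combinatorial identities already recorded in Section \ref{treebijection}.
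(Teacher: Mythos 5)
Your proposal is correct and follows essentially the same route as the paper: apply the tree bijection to identify fixed points with size-one root subtrees, use Lemma \ref{lemma local limit} plus Skorokhod representation to get almost sure local convergence, use Lemma \ref{lemma sbgwdecomp} to decompose $\tilde T$, and observe that the finite root subtrees stabilize (eventual equality) while the unique infinite one absorbs the bulk of the mass, so the two halves of the empirical measure converge separately. The only cosmetic difference is that the paper defines $N_n$ via the threshold $\sum_{i\le k}\#F^{T_n}_i>\lfloor n/2\rfloor$ rather than as the index of the swelling subtree, but these coincide for large $n$ by the same stabilization argument you give.
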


\begin{proof}
Let $T_{n+1}$ be distributed like $\cT_1$ conditioned on $\#\cT_1=n+1$.  Since $T_n \rightarrow_d \tilde T$ by Lemma \ref{lemma local limit} and $\fT$ admits a complete metric, we may use the Skorokhod Representation Theorem (see e.g. \cite[Theorem 4.30]{kallenberg2002foundations}) to construct a version of the sequence $(T_n)_{n\geq 0}$ and $\tilde T$ such that $T_n \rightarrow \tilde T$ almost surely.  For $\bt\in \fT$ and $1\leq i\leq d_\bt(\emptyset)$, define $F^\bt_i \in \fT$ to be the fringe subtree of $\bt$ above $i$.  That is, $F^\bt_i \in \fT$ is the rooted ordered tree with vertex set $\{ u\in \cU : iu\in \bt\}$.  In particular, note that $\bt = < F^\bt_1,\dots, F^\bt_{d_\bt(\emptyset)}>$.  

Define
\[ N_n = \inf\left\{ k: \sum_{i=1}^k \#F^{T_n}_k > \lfloor n/2\rfloor \right\} \quad \textrm{and} \quad N = \inf\left\{ k: \sum_{i=1}^k \#F^{\tilde T}_k =\infty \right\}.\]
By Lemma \ref{lemma sbgwdecomp}, $N$ is the unique index such that $ \#F^{\tilde T}_N=\infty$.  Letting $\tau_n = \tau_{T_{n+1}}$, we observe that 
\begin{multline*} \left( \sum_{i=1}^{\lfloor n/2\rfloor} \delta_i \cf_{\{\tau_n(i)=i\}} , \sum_{i=\lfloor n/2\rfloor+1}^n \delta_{n+1-i} \cf_{\{\tau_n(i)=i\}} \right)\\ = \left( \sum_{i=1}^{N_{n+1}-1} \delta_{\sum_{j=1}^i \#F^{T_{n+1}}_j} \cf_{\left\{\#F^{T_{n+1}}_i=1\right\}}, \sum_{i=N_{n+1}}^{d_{T_{n+1}}(\emptyset)} \delta_{\sum_{j=i}^{d_{T_{n+1}}(\emptyset)}\#F^{T_{n+1}}_j} \cf_{\left\{\#F^{T_{n+1}}_i=1\right\}}\right),
\end{multline*}
Since $T_n \rightarrow T$ almost surely there exists $M=M(\omega)$ such that for all $n\geq M(\omega)$, $d_{T_{n}}(\emptyset) = d_{\tilde T}(\emptyset)$, $F^{T_n}_j = F^{\tilde T}_j $ for all $1\leq j \leq d_{\tilde T}(\emptyset)$ such that $\#  F^{\tilde T}_j<\infty$, $N_n=N$, and $ \# F^{T_n}_{N_n} \geq 3n/4$.  Therefore, for $n \geq M(\omega)$ 
\begin{multline*} \left( \sum_{i=1}^{N_{n+1}-1} \delta_{\sum_{j=1}^i \#F^{T_{n+1}}_j} \cf_{\left\{\#F^{T_{n+1}}_i=1\right\}}, \sum_{i=N_{n+1}}^{d_{T_{n+1}}(\emptyset)} \delta_{\sum_{j=i}^{d_{T_{n+1}}(\emptyset)}\#F^{T_{n+1}}_j} \cf_{\left\{\#F^{T_{n+1}}_i=1\right\}}\right) \\
 = \left( \sum_{i=1}^{N-1} \delta_{\sum_{j=1}^i \#F^{\tilde T}_j} \cf_{\{\#F^{\tilde T}_i=1\}}, \sum_{i=N+1}^{d_{\tilde T}(\emptyset)} \delta_{\sum_{j=i}^{d_{\tilde T}(\emptyset)}\#F^{\tilde T}_j} \cf_{\{\#F^{\tilde T}_i=1\}}\right).
 \end{multline*}
Noting that
\begin{multline*}
\left( \sum_{i=1}^{N-1} \delta_{\sum_{j=1}^i \#F^{\tilde T}_j} \cf_{\{\#F^{\tilde T}_i=1\}}, \sum_{i=N+1}^{d_{\tilde T}(\emptyset)} \delta_{\sum_{j=i}^{d_{\tilde T}(\emptyset)}\#F^{\tilde T}_j} \cf_{\{\#F^{\tilde T}_i=1\}}\right) \\
=_d  \left( \sum_{i=1}^{N-1} \delta_{\sum_{j=1}^i \#\cT_j} \cf_{\{\#\cT_i=1\}}, \sum_{i=N+1}^{\tilde X} \delta_{\sum_{j=i}^{\tilde X} \#\cT_j} \cf_{\{\#\cT_{i}=1\}}\right),
\end{multline*}
completes the proof. 
\end{proof}

Using the special properties of the geometric distribution, we are able to simplify the statement of the theorem and, indeed, remove the trees from the picture entirely.  

\begin{proof}[Proof of Theorem \ref{fp_distro}]

Since $\P(\tilde X= j ) = j (1/2)^{j+1}$ we compute that for $k\geq 0$
\[ \P(N = k+1) = \sum_{j=k+1}^\infty \frac{1}{j}\P(\tilde X=j) = \sum_{j=k+1}^\infty \frac{1}{2^{j+1}} = \frac{1}{2^{k+1}}.\]
Thus $N-1$ has a Geometric$(1/2)$ distribution.  Similarly, for $k\geq 0$ we have
\[ \P(\tilde X-N= k) = \sum_{j=k+1}^\infty  \P(\tilde X-N= k, \tilde X=j) = \sum_{j=k+1}^\infty \frac{1}{j}\P(\tilde X=j)  = \frac{1}{2^{k+1}},\]
which shows that $\tilde X-N$ has a Geometric$(1/2)$ distribution.  Moreover for $k,j\geq 0$, 
\[ \begin{split} \P(N-1=k, \tilde X-N = j) = \P(N=k+1,\tilde X = j+k+1) & = \frac{1}{j+k+1} \P(\tilde X = j+k+1) \\
& = \frac{1}{j+k+1} (j+k+1)\frac{1}{2^{j+k+2}} \\
& =\P(N-1=k) \P(\tilde X-N= j)
.\end{split}\]
Thus $N-1$ and $\tilde X-N$ are independent Geometric$(1/2)$ random variables.  Furthermore, for $\bt\in\fT$, we have that
\[ \P(T_1=\bt) = \prod_{v\in\bt} \frac{1}{2^{d_\bt(v)+1}} = \frac{1}{2^{2\#\bt -1}},\] 
from which it follows that
\[ \P(\# T_1 = k) = \frac{1}{2^{2k-1}} C_{k-1} =  \frac{1}{k2^{2k-1}} { 2k-2 \choose k-1} \]
where $C_{k-1} = \frac{1}{k} { 2k-2 \choose k-1}$ is the $(k-1)$'st Catalan number.  
\end{proof}

\begin{proof}[Proof of Corollary \ref{321count}]

Let $Y_1 = \sum_{k=1}^N1_{\{X_k = 1\}}$ and $Y_2 = \sum_{k=1}^M\cf_{\{Y_k = 1\}}.$  By Theorem \ref{fp_distro}, the limiting distribution of the total number of fixed points up to $\lfloor n/2\rfloor $ is counted by $Y_1$ and the total after is counted by $Y_2$.  Both are independent copies of the same random variable $Y$ with distribution
\begin{align*}
\prob(Y=k) &= \sum_{t=0}^\infty \prob( N=t ) { t \choose k} 2^{-t}  \\
&= \sum_{t=0}^\infty 2^{-2t-1}{t \choose k}\\
&= 2 *3^{-k-1}
\end{align*}
which is the probability mass function for a Geometric$(2/3)$ random variable.  
\end{proof}

\bibliographystyle{plain}
\bibliography{pattern}

\end{document}